\newenvironment{proof}[1][\proofname]{\par\normalfont
  \topsep6pt plus6pt\trivlist\item[\hskip\labelsep\itshape
  #1\@addpunct{:}]\ignorespaces}{\qed\endtrivlist}
\newcommand{\proofname}{Proof}
\DeclareRobustCommand{\qed}{%
  \ifmmode
  \else\leavevmode\unskip\penalty9999\hbox{}\nobreak\hfill\fi
  \quad\hbox{\qedsymbol}}
\newcommand{\qedsymbol}{\openbox}
\newcommand{\openbox}{\leavevmode\hbox to.77778em{%
    \hfil\vrule\vbox to.675em{%
      \hrule width.6em\vfil\hrule}\vrule\hfil}}
\newcommand{\Prb}{\mathbb{P}}\newcommand{\Exp}{\mathbb{E}}
\newcommand{\N}{\mathbb{N}}\newcommand{\R}{\mathbb{R}}
\newcommand{\B}{\mathcal{B}}\newcommand{\F}{\mathcal{F}}
\newcommand{\G}{\mathcal{G}}
\newcommand{\dd}{\mathrm{d}}
\newcommand{\os}{\mathrm{o}}
\newcommand{\SINR}{\mathsf{SINR}}
\newcommand{\CP}{\mathsf{CP}}
\newcommand{\Tx}{\mathsf{Tx}}
\newcommand{\convolution}{\mathop{\times\rlap{\kern-.747918em$+$\hss}}}
\newcommand{\ind}[1]{\bm{1}_{#1}}
\let\Bar\overline\let\Tilde\widetilde\let\Hat\widehat
\newtheorem{corollary}{Corollary}
\newtheorem{lemma}{Lemma}
\newtheorem{proposition}{Proposition}
\newtheorem{remark}{Remark}
\newtheorem{theorem}{Theorem}
\title{Neveu's exchange formula for analysis of\\
  wireless networks with hotspot clusters}
\author{Naoto~Miyoshi\thanks{The author is with Department of
    Mathematical and Computing Science, Tokyo Institute of Technology,
    Tokyo 152-8552, Japan.   
    E-mail: miyoshi@is.titech.ac.jp}%
\thanks{The author's work was supported by the Japan Society
  for the Promotion of Science (JSPS) Grant-in-Aid for Scientific
  Research (C) 19K11838.}%
}\date{}%
\begin{document}\sloppy\allowbreak\allowdisplaybreaks
\maketitle

\begin{abstract}
Theory of point processes, in particular Palm calculus within the
stationary framework, plays a fundamental role in the analysis of
spatial stochastic models of wireless communication networks.
Neveu's exchange formula, which connects the respective Palm
distributions for two jointly stationary point processes, is known as
one of the most important results in the Palm calculus.
However, its use in the analysis of wireless networks seems to be
limited so far and one reason for this may be that the formula in a
well-known form is based upon the Voronoi tessellation.
In this paper, we present an alternative form of Neveu's exchange
formula, which does not rely on the Voronoi tessellation but includes
the one as a special case.
We then demonstrate that our new form of the exchange formula is
useful for the analysis of wireless networks with hotspot clusters
modeled using cluster point processes.\\
\textbf{Keywords:}
Stationary point processes, Palm calculus, Neveu's exchange formula,
cluster point processes, device-to-device networks, hotspot clusters,
coverage probability, device discovery.
\end{abstract}

\section{Introduction}

Spatial stochastic models have been widely accepted in the literature
as mathematical models for the analysis of wireless communication
networks, where irregular locations of wireless nodes, such as base
stations~(BSs) and user devices, are modeled using spatial point
processes on the Euclidean plane (see, e.g.,
\cite{BaccBlas09a,BaccBlas09b,HaenGant09,Haen13,Mukh14,BlasHaenKeelMukh18}
for monographs and
\cite{AndrGuptDhil16,ElSaSultAlouWin17,HmamBenjSaouYaniDiRe21,LuSaleHaenHossJian21}
for recent survey and tutorial articles).
In such analysis of wireless networks, the theory of point processes,
in particular Palm calculus within the stationary framework, plays a
fundamental role.
Neveu's exchange formula, which connects the respective Palm
distributions for two jointly stationary point processes, is known as
one of the most important results in the Palm calculus.
However, its use in the analysis of wireless networks seems to be
limited so far and one reason for this may be that the formula in a
well-known form is based upon the Voronoi tessellation~(see, e.g,
\cite[Section~6.3]{BaccBlasKarr20}).
In this paper, we present an alternative form of Neveu's exchange
formula, which does not rely on the Voronoi tessellation but includes
the one as a special case, and then demonstrate that it is useful for
the analysis of spatial stochastic models based on cluster point
processes.

A cluster point process represents a state such that there exist a
large number of clusters consisting of multiple points and is used to
model the locations of wireless nodes in an (urban) area with a number
of hotspots.
Indeed, many researchers have adopted the cluster point processes in
their models of various wireless networks such as ad~hoc
networks \cite{GantHaen09}, heterogeneous networks
\cite{SuryMollFett15,ChunHasnGhra15,SahaAfshDhil17,SahaAfshDhil18,AfshDhil18,SahaDhilMiyoAndr19,YangLimZhaoMota21},
device-to-device~(D2D) networks \cite{AfshDhilChon16}, wireless
powered networks \cite{ChenWangZhang17}, unmanned aerial vehicle
assisted networks \cite{TurgGurs18}, and so on.
In this paper, we focus on so-called stationary Poisson-Poisson
cluster processes (PPCPs) (see, e.g., \cite{BlasYoge09,Miyo19}) and
apply the new form of the exchange formula to the analysis of
stochastic models based on them.

We first use the exchange formula for the Palm characterization,
where we derive the intensity measure, the generating functional and
the nearest-neighbor distance distribution for a stationary PPCP under
its Palm distribution.
Although these results are known in the literature (see, e.g,
\cite{Baud81,GantHaen09}), we here give them simple and unified
proofs using the new form of the exchange formula.
We next consider some applications to wireless networks modeled using
stationary PPCPs, where we examine the problems of coverage and device
discovery in a D2D network.
The coverage analysis of a D2D network model based on a cluster point
process was considered in \cite{AfshDhilChon16}, where a device
communicates with another device in the same cluster.
In contrast to this, we assume here that a device receives messages
from the nearest transmitting device, which is possibly in a different
cluster because clusters may overlap in space.
For this model, we derive the coverage probability using the exchange
formula.
On the other hand, in the problem of device discovery, transmitting
devices transmit broadcast messages and a receiving device can detect
the transmitters if it can successfully decode the broadcast messages.
Such a problem was studied in
\cite{HamiChelBussFleu08,Bacc6151335,KwonChoi14} when the devices are
located according to a homogeneous Poisson point process~(PPP) and in
\cite{KwonJuLee20} when the devices are located according to a
Ginibre point process (see, e.g., \cite{MiyoShir14a,MiyoShir16} for
the Ginibre point process and its applications to wireless networks).
We consider the case where the devices are located according to a
stationary PPCPs and derive the expected number of transmitting
devices discovered by a receiving device.
We should note that Neveu's exchange formula is also introduced in a
more general form in \cite{LastThor09,Last10,GentLast11}, so that the
form presented in the paper may be within its scope.
Nevertheless, we see in the rest of the paper that our new form would
be valuable and could spread the application fields of the exchange
formula.

The rest of the paper is organized as follows.
The new form of Neveu's exchange formula is derived in the next
section, where the relations with the existing forms are also
discussed.
In Section~\ref{sec:PPCP}, the exchange formula is applied to the Palm
characterization of a stationary PPCP, where alternative proofs of the
intensity measure, the generating functional and the nearest-neighbor
distance distribution under the Palm distribution are given.
In section~\ref{sec:WNs}, some applications to wireless network models
are examined, where for a D2D network model based on a stationary
PPCP, the coverage probability and the expected number of discovered
devices are derived using the exchange formula. 
The results of numerical experiments are also presented there.
Concluding remarks are provided in Section~\ref{sec:concl}.

\section{Neveu's exchange formula}

In this section, we discuss point processes on the $d$-dimensional
Euclidean space $\R^d$ within the stationary framework (see, e.g.,
\cite[Chapter~6]{BaccBlasKarr20} for details on the stationary
framework).
In what follows, $\B(\R^d)$ denotes the Borel $\sigma$-field on $\R^d$
and $\delta_x$ denotes the Dirac measure with mass at $x\in\R^d$.
Let $(\Omega,\F,\Prb)$ denote a probability space.
On $(\Omega,\F)$, a flow $\{\theta_t\}_{t\in\R^d}$ is defined such
that $\theta_t$:~$\Omega\to\Omega$ is $\F$-measurable and bijective
satisfying $\theta_t\circ\theta_u = \theta_{t+u}$ for $t,u\in\R^d$, 
where $\theta_0$ is the identity for $0=(0,0,\ldots,0)\in\R^d$; so
that $\theta_t^{-1} = \theta_{-t}$ for $t\in\R^d$.
We assume that the probability measure $\Prb$ is invariant to the flow
$\{\theta_t\}_{t\in\R^d}$ (in other words, $\{\theta_t\}_{t\in\R^d}$
preserves $\Prb$) in the sense that $\Prb\circ\theta_t^{-1} = \Prb$
for any $t\in\R^d$, where $\theta_t^{-1}(A) = \{\omega\in\Omega :
\theta_t(\omega)\in A\}$ for $A\in\F$.
A point process $\Phi = \sum_{n=1}^\infty\delta_{X_n}$ on $\R^d$ is
said to be compatible with the flow $\{\theta_t\}_{t\in\R^d}$ if it
holds that $\Phi(B)\circ\theta_t = \Phi(\theta_t(\omega), B) =
\Phi(\omega, B+t) = \Phi(B+t)$ for $\omega\in\Omega$, $B\in\B(\R^d)$
and $t\in\R^d$, where $B+t = \{x+t \in \R^d : x \in B\}$; that is, for
$t\in\R^d$ and $n\in\N=\{1,2,\ldots\}$, there exists an $n'\in\N$ such
that $X_n\circ\theta_t = X_{n'}-t$.
Under the assumption of the $\{\theta_t\}_{t\in\R^d}$-invariance of
$\Prb$, a point process $\Phi$ compatible with
$\{\theta_t\}_{t\in\R^d}$ is stationary in $\Prb$ and furthermore, two
point processes $\Phi$ and $\Psi$, both of which are compatible with
$\{\theta_t\}_{t\in\R^d}$, are jointly stationary in $\Prb$.

Let $\Phi = \sum_{n=1}^\infty\delta_{X_n}$ and $\Psi =
\sum_{m=1}^\infty\delta_{Y_m}$ denote point processes on $\R^d$, which
are both simple, compatible with $\{\theta_t\}_{t\in\R^d}$ and have
positive and finite intensities $\lambda_\Phi$ and $\lambda_\Psi$,
respectively.
Thus, $\Phi$ and $\Psi$ are jointly stationary in probability $\Prb$
and the respective Palm probabilities $\Prb_\Phi^0$ and $\Prb_\Psi^0$
are well-defined.
Note that $\Prb_\Phi^0(\Phi(\{0\})=1) = \Prb_\Psi^0(\Psi(\{0\})=1) =
1$.
In this paper, when we consider the event $\{\Phi(\{0\})=1\}\in\F$, we
assign index~$0$ to the point at the origin; that is, $X_0=0$ on
$\{\Phi(\{0\})=1\}$, and this is also the case for $\Psi$; that is,
$Y_0=0$ on $\{\Psi(\{0\})=1\}$.
To present an alternative form of Neveu's exchange formula, we
introduce a family of shift operators $S_t$, $t\in\R^d$, on the set of
measures $\eta$ on $(\R^d,\B(\R^d))$ by $S_t\eta(B) = \eta(B+t)$ for
$B\in\B(\R^d)$.
For example, operating $S_t$ on the point process $\Psi =
\sum_{m=1}^\infty\delta_{Y_m}$, we have $S_t\Psi =
\sum_{m=1}^\infty\delta_{Y_m-t} = \Psi\circ\theta_t$. 
The shift operators $S_t$, $t\in\R^d$, also work on a function $h$ on
$\R^d$ such as $S_t h(x) = h(x+t)$ for $x\in\R^d$.

\begin{theorem}\label{thm:Neveu}
For the two jointly stationary point processes $\Phi =
\sum_{n=1}^\infty \delta_{X_n}$ and $\Psi = \sum_{m=1}^\infty
\delta_{Y_m}$ described above, we assume that a family of point
processes $\Psi_n = \sum_{k=1}^{\kappa_n}\delta_{Y_{n,k}}$, $n\in\N$,
can be constructed such that
\begin{enumerate}
\item\label{itm:main2} $S_{-X_n}\Psi_n =
  \sum_{k=1}^{\kappa_n}\delta_{X_n+Y_{n,k}}$, $n\in\N$, form a
  partition of $\Psi$; that is, $\Psi = \sum_{n=1}^\infty
  S_{-X_n}\Psi_n$.
\item\label{itm:main1} $\Tilde{\Phi} =
  \sum_{n=1}^\infty\delta_{(X_n,\Psi_n)}$ is a stationary marked point
  process with the set of counting measures on $\R^d$ as its mark
  space.
\end{enumerate}
Then, for any nonnegative random variable $W$ defined on
$(\Omega,\F)$,
\begin{equation}\label{eq:Neveu}
  \lambda_\Psi\,\Exp_\Psi^0[W]
  = \lambda_\Phi\,\Exp_\Phi^0\biggl[
      \int_{\R^d} W\circ\theta_y\,\Psi_0(\dd y)
    \biggr]
  = \lambda_\Phi\,\Exp_\Phi^0\Biggl[
      \sum_{k=1}^{\kappa_0} W\circ\theta_{Y_{0,k}}
    \Biggr],
\end{equation}
where $\Exp_\Phi^0$ and $\Exp_\Psi^0$ denote the expectations with
respect to the Palm probabilities $\Prb_\Phi^0$ and $\Prb_\Psi^0$,
respectively, and $\Psi_0 = \sum_{k=1}^{\kappa_0}\delta_{Y_{0,k}}$
denotes the mark associated with the point $X_0=0$ on
$\{\Phi(\{0\})=1\}$.
\end{theorem}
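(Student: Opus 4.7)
The plan is to apply the refined Campbell (Campbell--Mecke) formula twice: once to $\Psi$, to convert the left-hand side of~\eqref{eq:Neveu} into a stationary expectation, and once to the marked point process $\Tilde{\Phi}$, to bring in the Palm probability $\Prb_\Phi^0$ on the right. The partition in assumption~(\ref{itm:main2}) serves as the bridge between the two applications, while assumption~(\ref{itm:main1}) guarantees that $\Tilde{\Phi}$ is a bona fide stationary marked point process so that the second application is legitimate.

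First I would fix a bounded Borel set $B\subset\R^d$ with positive and finite Lebesgue measure $|B|$ and apply the refined Campbell formula to $\Psi$ to obtain
\[
  \lambda_\Psi\,|B|\,\Exp_\Psi^0[W]
    = \Exp\biggl[\sum_{m=1}^\infty \ind{B}(Y_m)\,W\circ\theta_{Y_m}\biggr].
\]
Substituting the partition $\Psi=\sum_n S_{-X_n}\Psi_n$ from assumption~(\ref{itm:main2}) and using the flow property $\theta_{X_n+Y_{n,k}}=\theta_{Y_{n,k}}\circ\theta_{X_n}$, the right-hand side can be regrouped as $\Exp\bigl[\sum_n f(X_n,\Psi_n,\theta_{X_n})\bigr]$, where
\[
  f(x,\psi,\omega') := \int_{\R^d} \ind{B}(x+y)\,W(\theta_y\omega')\,\psi(\dd y)
\]
encodes the dependence on a point's position, its mark, and the shifted configuration.

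Next I would apply the refined Campbell formula to the stationary marked point process $\Tilde{\Phi}$; since under $\Prb_\Phi^0$ the mark of the point at the origin is $\Psi_0$, this yields
\[
  \Exp\biggl[\sum_{n=1}^\infty f(X_n,\Psi_n,\theta_{X_n})\biggr]
    = \lambda_\Phi \int_{\R^d} \Exp_\Phi^0\bigl[f(x,\Psi_0,\cdot)\bigr]\,\dd x.
\]
Finally I would invoke Fubini's theorem to swap the $\dd x$-integral with the $\Psi_0(\dd y)$-integral inside $\Exp_\Phi^0$; the $x$-integral of $\ind{B}(x+y)$ produces the factor $|B|$ independently of $y$, which then separates cleanly and cancels against the $|B|$ on the left, producing~\eqref{eq:Neveu}.

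The main obstacle I anticipate is bookkeeping rather than analysis: tracking precisely which configuration each flow operator acts upon, and confirming that $f$ is jointly measurable in its three arguments so that the marked Campbell--Mecke formula is applicable with the counting-measure-valued mark space stipulated in assumption~(\ref{itm:main1}). The Voronoi-tessellation form of Neveu's formula, recovered by taking $\Psi_n$ to be the restriction of $\Psi$ to the Voronoi cell of $X_n$ (re-centred at $X_n$), provides a useful sanity check on the shifts.
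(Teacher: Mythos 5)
Your argument is correct, and it reaches \eqref{eq:Neveu} by a route that differs from the paper's in a meaningful way. The paper starts from the mass transport formula \eqref{eq:MassTransport}, chooses $\xi(y)=W\circ\theta_y\,\Psi_0(\{y\})$, and then must evaluate the resulting $\Prb_\Psi^0$-expectation $\Exp_\Psi^0\bigl[\,W\sum_{n}S_{-X_n}\Psi_n(\{0\})\bigr]$; the partition assumption enters there, through the observation that under $\Prb_\Psi^0$ exactly one mark $\Psi_n$ places a point covering the origin, so the sum collapses to $1$. You instead bypass the mass transport formula and apply the refined Campbell formula twice: once to $\Psi$ to produce $\lambda_\Psi|B|\Exp_\Psi^0[W]$, and once to the marked process $\Tilde{\Phi}$ after regrouping $\sum_m\ind{B}(Y_m)W\circ\theta_{Y_m}$ as $\sum_n f(X_n,\Psi_n,\theta_{X_n})$ via the partition and the flow identity $\theta_{X_n+Y_{n,k}}=\theta_{Y_{n,k}}\circ\theta_{X_n}$; the factor $|B|=\int\ind{B}(x+y)\,\dd x$ then cancels. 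This uses the partition hypothesis only at the level of the stationary expectation, where the regrouping is a pointwise identity, and so avoids the one genuinely delicate step of the paper's proof (identifying the covering mark under $\Prb_\Psi^0$). The price is that you must invoke the refined Campbell formula in its marked form and check compatibility of the marks with the flow ($\Psi_0\circ\theta_{X_n}=\Psi_n$), which is exactly what Condition~\ref{itm:main1} supplies; the measurability bookkeeping you flag is routine since $W\ge0$ and Tonelli applies throughout. Both proofs ultimately rest on the same Campbell--Mecke machinery (the mass transport formula is itself a double application of it), but yours is the more self-contained of the two.
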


\begin{proof}
As with the proof of the exchange formula in
\cite[Theorem~6.3.7]{BaccBlasKarr20}, we start our proof with the
mass transport formula (see, e.g.,
\cite[Theorem~6.1.34]{BaccBlasKarr20}); that is, for any measurable
function $\xi$:~$\Omega\times\R^d\to\Bar{\R}_+$,
\begin{equation}\label{eq:MassTransport}
  \lambda_\Phi\,\Exp_\Phi^0\biggl[
    \int_{\R^d} \xi(y)\,\Psi(\dd y)
  \biggr]
  = \lambda_\Psi\,\Exp_\Psi^0\biggl[
      \int_{\R^d} \xi(-x)\circ\theta_x\,\Phi(\dd x)
    \biggr].
\end{equation}
Let $\xi(y) = W\circ\theta_y\,\Psi_0(\{y\})$ on $\{\Phi(\{0\})=1\}$.
Then, the left-hand side of \eqref{eq:MassTransport} becomes
\[
  \lambda_\Phi\,\Exp_\Phi^0\biggl[
    \int_{\R^d} W\circ\theta_y\,\Psi_0(\{y\})\,\Psi(\dd y)
  \biggr]
  = \lambda_\Phi\,\Exp_\Phi^0\biggl[
      \int_{\R^d} W\circ\theta_y\,\Psi_0(\dd y)
  \biggr].
\]
On the other hand, the right-hand side of \eqref{eq:MassTransport} is
reduced to
\[
  \lambda_\Psi\,\Exp_\Psi^0\biggl[
    \int_{\R^d}
      \bigl(W\circ\theta_{-x}\,\Psi_0(\{-x\})\bigr)\circ\theta_x\,
    \Phi(\dd x)
  \biggr]
  = \lambda_\Psi\,\Exp_\Psi^0\biggl[
      W\sum_{n=1}^\infty S_{-X_n}\Psi_n(\{0\})
    \biggr]
  = \lambda_\Psi\,\Exp_\Psi^0[W],
\]
where the first equality follows from $\Psi_0\circ\theta_{X_n} =
\Psi_n$ for $n\in\N$ and the fact that the point of $\Psi$ at the
origin on a sample $\omega\in\{\Psi(\{0\})=1\}$ is shifted to
location $-x$ on the shifted sample $\theta_x(\omega)$ for $x\in\R^d$,
and the last equality follows since there exists exactly one point,
say $X_n$, of $\Phi$ such that its mark
$\Psi_n=\sum_{k=1}^{\kappa_n}\delta_{Y_{n,k}}$ has a point, say
$Y_{n,k}$, satisfying $X_n+Y_{n,k}=0$ on $\{\Psi(\{0\})=1\}$.
The proof is completed.
\end{proof}

\begin{remark}
Let $W\equiv1$ in \eqref{eq:Neveu}.
Then, we have $\Exp_\Phi^0[\kappa_0] = \lambda_\Psi/\lambda_\Phi$ and
therefore, each $\Psi_n$ in Theorem~\ref{thm:Neveu} has finite points.
In the form of the exchange formula in \cite[Theorems~6.3.7 \&
  6.3.19]{BaccBlasKarr20}, the point process $\Psi$ is partitioned by
the Voronoi tessellation for $\Phi$, which corresponds to a special
case of \eqref{eq:Neveu} such that $S_{-X_n}\Psi_n(\cdot) =
\Psi(\cdot\cap V_\Phi(X_n))$ for $n\in\N$, where $V_\Phi(X_n)$ denotes
the Voronoi cell of point $X_n$ of $\Phi$.
The condition in \cite{BaccBlasKarr20} such that there are no points
of $\Psi$ on the boundaries of Voronoi cells~$V_\Phi(X_n)$, $n\in\N$,
is covered by our Condition~\ref{itm:main2} in
Theorem~\ref{thm:Neveu}, where $S_{-X_n}\Psi_n$, $n\in\N$, form a
partition of $\Psi$ and have no common points.
On the other hand, our Theorem~\ref{thm:Neveu} considers only the case
where the point process $\Phi$ is simple unlike
\cite[Theorem~6.3.7]{BaccBlasKarr20}.
However, this would be enough for applications to wireless networks
and it could be extended to the non-simple case.
It should also be noted that, in \cite{LastThor09,Last10,GentLast11},
a more general formula is introduced under the name of Neveu's
exchange formula, from which the mass transport
formula~\eqref{eq:MassTransport} is derived.
In that sense, our form~\eqref{eq:Neveu} may be within its scope.
Nevertheless, we can see in the following sections that
Theorem~\ref{thm:Neveu} is valuable in the sense that it is tractable
and can spread the application fields of the exchange formula.
\end{remark}

\section{Applications to cluster point processes}\label{sec:PPCP}

In this section, we demonstrate that Neveu's exchange
formula~\eqref{eq:Neveu} in Theorem~\ref{thm:Neveu} is useful to
characterize the Palm distribution of stationary cluster point
processes.
A cluster point process is, roughly speaking, constructed by placing
point processes (usually with finite points), called offspring
processes, around respective points of another point process, called a
parent process, and is used to represent a state such that there exist
a large number of clusters consisting of multiple points (see
Figure~\ref{fig:clusterPP}).
In particular, we focus here on a stationary PPCP described next.

\begin{figure}
\begin{center}
\frame{\vbox{\kern1cm\hbox{\kern-1cm\includegraphics[width=9cm, trim= 55 10 60 20,
    clip]{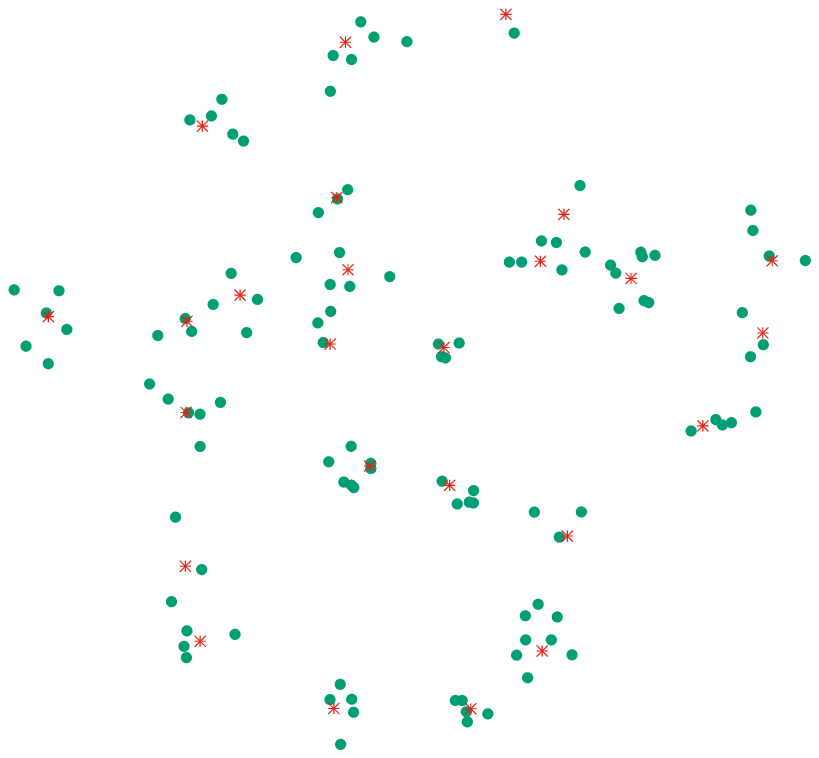}}\kern-1cm}}%
\end{center}
\caption{A sample of a $2$-dimensional cluster point process
  (\textcolor{ForestGreen}{$\bullet$} represents the points of the
  cluster point process and \textcolor{red}{$\convolution$} represents
  the points of the parent process)}\label{fig:clusterPP}
\end{figure}

\subsection{Poisson-Poisson cluster processes}\label{subsec:PPCP}

Let $\Phi = \sum_{n=1}^\infty\delta_{X_n}$ denote a homogeneous PPP on
$\R^d$, which works as the parent process, and let $\Psi_n =
\sum_{k=1}^{\kappa_n}\delta_{Y_{n,k}}$, $n\in\N$, denote a family of
finite (therefore inhomogeneous) and mutually independent PPPs on
$\R^d$, which are also independent of $\Phi$ and work as the offspring
processes.
Then, PPCP $\Psi = \sum_{m=1}^\infty\delta_{Y_m}$ is given as
\begin{equation}\label{eq:PPCP}
  \Psi = \sum_{n=1}^\infty S_{-X_n}\Psi_n
  = \sum_{n=1}^\infty\sum_{k=1}^{\kappa_n}\delta_{X_n+Y_{n,k}}.
\end{equation}
The PPCP $\Psi$ constructed as above is stationary since the parent
process $\Phi$ is stationary and the offspring processes $\Psi_n$,
$n\in\N$, are independent and identically distributed (see, e.g.,
\cite[Example~2.3.18]{BaccBlasKarr20}).
We assume that $\Phi$ has a positive and finite intensity
$\lambda_\Phi$, and $\Psi_n$, $n\in\N$, have an identical intensity
measure $\Lambda_{\os} = \mu\,Q$, where $\mu$ is a positive constant
and $Q$ is a probability distribution on $(\R^d,\B(\R^d))$.
Thus, the number of points in each offspring process follows a Poisson
distribution with mean $\mu$, so that the intensity of $\Psi$ is equal
to $\lambda_\Psi = \lambda_\Phi\mu$, and offspring points are
scattered on $\R^d$ according to $Q$ independently of each other.
We further assume that $Q$ is diffuse; that is, $Q(\{x\})=0$ for any
$x\in\R^d$, to make $\Psi$ simple.
We refer to $S_{-X_n}\Psi_n$ in~\eqref{eq:PPCP} as the cluster
associated with $X_n$ for $n\in\N$.
Two main examples of the PPCPs are the (modified) Thomas point process
and the Mat{\'e}rn cluster process (see, e.g.,
\cite[Example~5.5]{ChiuStoyKendMeck13}).
When $Q$ is an isotropic normal distribution, then the obtained PPCP
is called the Thomas point process.
On the other hand, when $Q$ is the uniform distribution on a fixed
ball centered at the origin, then the result is called the Mat{\'e}rn
cluster process.
Note that the PPCP $\Psi$ and its parent process $\Phi$ satisfy the
conditions of Theorem~\ref{thm:Neveu}.

\subsection{Characterization of Palm distribution}

For a stationary point process $\Psi$, let $\Psi^! := \Psi-\delta_0$
on the event $\{\Psi(\{0\})=1\}$, which is referred to as the reduced
Palm version of $\Psi$.

\begin{lemma}\label{lem:intensity}
For the stationary PPCP $\Psi$ described in Section~\ref{subsec:PPCP},
the intensity measure of the reduced Palm version $\Psi^!$ (with
respect to the Palm distribution) is given by
\begin{equation}\label{eq:intensity}
  \Lambda_\Psi^0(B)
  := \Exp_\Psi^0[\Psi^!(B)]
  = \lambda_\Phi\,\mu\,|B|
    + \mu\int_{\R^d}Q(B-y)\,Q^-(\dd y),
  \quad B\in\B(\R^d),
\end{equation}
where $|\cdot|$ denotes the Lebesgue measure on $(\R^d,\B(\R^d))$ and
$Q^-(B)=Q(-B)$ with $-B = \{-x : x\in B\}$ for $B\in\B(\R^d)$.
\end{lemma}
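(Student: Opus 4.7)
The plan is to apply Neveu's exchange formula~\eqref{eq:Neveu} with $W$ chosen so that $W$ coincides with $\Psi^!(B)$ on the Palm event $\{\Psi(\{0\})=1\}$, and to unpack the right-hand side using the explicit Poisson structure of the PPCP together with Slivnyak's theorem for the parent process $\Phi$.

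Concretely, I would take $W(\omega) = \Psi(\omega, B) - \Psi(\omega, \{0\})\,\ind{B}(0)$, so that a direct computation gives the shift $W\circ\theta_y = \Psi(B+y) - \Psi(\{y\})\,\ind{B}(0)$. Under $\Prb_\Phi^0$, each offspring point $Y_{0,k}$ is a point of $\Psi$, hence $\Psi(\{Y_{0,k}\})=1$, and consequently
\[
  \sum_{k=1}^{\kappa_0} W\circ\theta_{Y_{0,k}}
  = \sum_{k=1}^{\kappa_0}\Psi(B+Y_{0,k}) - \kappa_0\,\ind{B}(0).
\]
By Slivnyak's theorem applied to the Poisson parent $\Phi$, under $\Prb_\Phi^0$ the PPCP decomposes as $\Psi = \Psi_0 + \Psi'$, where $\Psi_0$ is the offspring Poisson process at the origin with intensity measure $\mu\,Q$ and $\Psi'$ is an independent copy of the original stationary PPCP $\Psi$. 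Substituting this decomposition splits the sum $\sum_k\Psi(B+Y_{0,k})$ into two pieces, which I would evaluate separately.

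For the $\Psi'$ contribution, independence from $\Psi_0$ together with the stationarity of $\Psi'$ with intensity $\lambda_\Psi$ gives $\Exp_\Phi^0\bigl[\sum_k \Psi'(B+Y_{0,k})\bigr] = \mu\,\lambda_\Psi\,|B|$. For the $\Psi_0$ contribution, the sum $\sum_k \Psi_0(B+Y_{0,k})$ is really a double sum over pairs $(j,k)$ of offspring points that I would split into its diagonal ($j=k$) and off-diagonal ($j\ne k$) parts: the diagonal yields exactly $\kappa_0\,\ind{B}(0)$, which cancels the $-\kappa_0\,\ind{B}(0)$ correction term, while the off-diagonal is evaluated by the second-order Campbell formula for $\Psi_0$, whose second factorial moment measure is $\mu^2\,Q\otimes Q$, producing $\mu^2\int_{\R^d}Q(B-y)\,Q^-(\dd y)$ after the change of variable $y\mapsto -y$. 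Assembling the pieces and dividing both sides by $\lambda_\Psi=\lambda_\Phi\mu$ delivers~\eqref{eq:intensity}. The step requiring most care is the bookkeeping of the $-\Psi(\{y\})\ind{B}(0)$ correction term after the shift and its cancellation with the diagonal of the double sum over $\Psi_0$; without this cancellation the formula would acquire a spurious atom at the origin that is inconsistent with the diffuseness of $Q$.
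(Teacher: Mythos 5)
Your proof is correct, but it follows a genuinely different route from the paper's. The paper first exploits the Cox structure of the PPCP: conditionally on the parent process $\Phi$, $\Psi$ is an inhomogeneous PPP, so Slivnyak's theorem for that conditional PPP gives $\Exp_\Psi^0[\Psi^!(B)\mid\Phi]=\mu\sum_{n}Q(B-X_n)$ \emph{before} the exchange formula is invoked; the exchange formula is then applied to this $\sigma(\Phi)$-measurable functional, and a single application of Campbell's formula for $\Phi$ (plus Slivnyak for $\Phi$) finishes the computation. You instead apply the exchange formula directly to $W=\Psi(B)-\Psi(\{0\})\,\ind{B}(0)$, which equals $\Psi^!(B)$ on the Palm event, and only afterwards bring in the Poisson structure: Slivnyak for the parent under $\Prb_\Phi^0$ to decompose $\Psi=\Psi_0+\Psi'$, a first-moment computation for the independent copy $\Psi'$, and the second factorial moment measure $\mu^2\,Q\otimes Q$ of the offspring process $\Psi_0$ with the diagonal/off-diagonal split. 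Your bookkeeping is sound: $W\ge0$ as required by Theorem~\ref{thm:Neveu}, the shift identity $W\circ\theta_y=\Psi(B+y)-\Psi(\{y\})\ind{B}(0)$ is right, $\Psi(\{Y_{0,k}\})=1$ holds by simplicity (diffuseness of $Q$), the diagonal term $\kappa_0\ind{B}(0)$ exactly cancels the correction, and the constants assemble to \eqref{eq:intensity} after dividing by $\lambda_\Psi=\lambda_\Phi\mu$. What the paper's route buys is economy: the conditional Slivnyak step disposes of the reduced-Palm conditioning in one stroke and avoids any second-moment computation. What your route buys is that it treats the observable $\Psi^!(B)$ itself as the test functional $W$, which makes the exchange formula's role more transparent and shows explicitly where the two terms of \eqref{eq:intensity} come from (cross-cluster versus same-cluster pairs); it is also closer in spirit to the interpretation given in Remark~\ref{rem1}.
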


\begin{proof}
Since the offspring processes $\Psi_n$, $n\in\N$, are PPPs, the PPCP
$\Psi$ is a Cox point process; that is, once the parent
process $\Phi = \sum_{n=1}^\infty\delta_{X_n}$ is given, $\Psi$ is
conditionally an inhomogeneous PPP with a conditional intensity
measure $\mu\sum_{n=1}^\infty S_{-X_n}Q$ (see, e.g.,
\cite[Example~2.3.13]{BaccBlasKarr20}).
Since the reduced Palm version of a PPP is identical in distribution
to its original version (not conditioned on $\{\Psi(\{0\})=1\}$) by
Slivnyak's theorem (see, e.g.,
\cite[Proposition~13.1.VII]{DaleVere08} or
\cite[Theorem~3.2.4]{BaccBlasKarr20}), we have
\[
  \Exp_\Psi^0[\Psi^!(B) \mid \Phi]
  = \Exp[\Psi(B)\mid \Phi]
  = \mu\sum_{n=1}^\infty Q(B-X_n),
  \quad B\in\B(\R^d).
\]
Taking the expectation with respect to $\Prb_\Psi^0$ and then applying
Theorem~\ref{thm:Neveu}, we obtain
\begin{align*}
  \Exp_\Psi^0[\Psi^!(B)]
  &= \mu\,\Exp_\Psi^0\Biggl[
       \sum_{n=1}^\infty Q(B-X_n)
     \Biggr]
   = \Exp_\Phi^0\Biggl[
       \int_{\R^d}
         \Biggl(\sum_{n=1}^\infty Q(B-X_n)\Biggr)\circ\theta_y\,
       \Psi_0(\dd y)
     \Biggr]
  \nonumber\\
  &= \Exp_\Phi^0\Biggl[
       \int_{\R^d}
         \sum_{n=0}^\infty Q(B-X_n+y)\,
       \Psi_0(\dd y)
     \Biggr] 
  \nonumber\\
  &= \mu\int_{\R^d}
       \Biggl(
         Q(B+y)
         + \Exp_\Phi^0\Biggl[
             \sum_{n=1}^\infty Q(B-X_n+y)
           \Biggr]
       \Biggr)\,
     Q(\dd y),
\end{align*}
where $\lambda_\Psi = \lambda_\Phi\mu$ is used in the second equality,
the third equality follows because, for any $n\in\N$ and $y\in\R^d$,
there exists an $n'\in\N\cup\{0\}$ such that $X_n\circ\theta_y =
X_{n'}-y$ on $\{\Phi(\{0\})=1\}$, and in the last equality, we apply
Campbell's formula (see, e.g., \cite[Proposition~2.7]{LastPenr17} or
\cite[Theorem~1.2.5]{BaccBlasKarr20}) for $\Psi_0$.
For the expectation in the last expression above, Slivnyak's theorem,
Campbell's formula for $\Phi$ and then Fubini's theorem yield
\begin{align*}
  \Exp_\Phi^0\Biggl[
    \sum_{n=1}^\infty Q(B-X_n+y)
  \Biggr]  
  &= \lambda_\Phi\int_{\R^d}Q(B-x)\,\dd x
   = \lambda_\Phi\int_{\R^d}\int_{B-x}Q(\dd z)\,\dd x
  \\
  &= \lambda_\Phi\int_{\R^d}\int_{B-z}\dd x\,Q(\dd z)
   = \lambda_\Phi\,|B|,
\end{align*}
which completes the proof.
\end{proof}

\begin{remark}\label{rem1}
The second term on the right-hand side of \eqref{eq:intensity} is of
course equal to $\mu\int Q(B+y)\,Q(\dd y)$.
We adopt the form in Lemma~\ref{lem:intensity} due to its
interpretability.
Since $Q$ is the distribution for the position of an offspring point
viewed from its parent, $Q^-$ represents the distribution for the
location of the parent of the offspring point at the origin on the
event $\{\Psi(\{0\})=1\}$.
On the other hand, $\mu\,Q(B-y)$ gives the expected number of
offspring points falling in $B\in\B(\R^d)$ among a cluster whose
parent is shifted to $y\in\R^d$.
In other words, the second term on the right-hand side of
\eqref{eq:intensity} represents the expected number of offspring
points falling in $B$ among the cluster which is given to have one
point at the origin.
Since the first term on the right-hand side of \eqref{eq:intensity} is
equal to $\Lambda_\Psi(B) = \Exp[\Psi(B)]$,
Lemma~\ref{lem:intensity} states that the intensity measure for the
reduced Palm version of a stationary PPCP is given as the sum of the
intensity measure of the stationary version and that of a cluster
which has one point at the origin.
Lemma~\ref{lem:intensity} is also a slight generalization of the
result in \cite[Section~2.2]{TanaOgatStoy08}.
\end{remark}

The observation in Remark~\ref{rem1} is further enhanced by the
following proposition.

\begin{proposition}\label{prp:PGFL}
For the stationary PPCP $\Psi = \sum_{m=1}^\infty\delta_{Y_m}$
described in Section~\ref{subsec:PPCP}, the generating functional of
the reduced Palm version $\Psi^!$ (with respect to the Palm
distribution) is given by
\begin{equation}\label{eq:PalmPGFL}
  \G_\Psi^0(h)
  := \Exp_\Psi^0\Biggl[\prod_{m=1}^\infty h(Y_m)\Biggr]
  = \G_\Psi(h)\int_{\R^d}\Tilde{h}(z)\,Q^-(\dd z),
\end{equation}
for any measurable function $h$:~$\R^d\to[0,1]$, where $\G_\Psi$ is
the generating functional of the stationary version of $\Psi$ given as
\begin{equation}\label{eq:stationaryPGFL}
  \G_\Psi(h):=
  \Exp\Biggl[\sum_{m=1}^\infty h(Y_m)\Biggr]
  = \G_\Phi(\Tilde{h})
  = \exp\biggl(
      - \lambda_\Phi\int_{\R^d}
          [1-\Tilde{h}(x)]\,
        \dd x
    \biggr),
\end{equation}
and $\Tilde{h}(x)$ denotes the generating
functional of an offspring process $\Psi_1$ whose parent is shifted to
$x\in\R^d$;
\begin{equation}\label{eq:osPGFL}
  \Tilde{h}(x)
  = \G_{\Psi_1}(S_xh)
  = \exp\biggl(
      - \mu\int_{\R^d}
          \bigl[1 - h(x+y)\bigr]\,Q(\dd y)
    \biggr).
\end{equation}
\end{proposition}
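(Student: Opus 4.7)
The plan is to establish \eqref{eq:stationaryPGFL} first and then derive \eqref{eq:PalmPGFL} from the exchange formula \eqref{eq:Neveu}. For \eqref{eq:stationaryPGFL}, I would condition on the parent process $\Phi$ and use that $\Psi$ is then a Poisson point process with intensity measure $\mu\sum_{n=1}^\infty S_{-X_n}Q$. Independence of the offspring clusters and the generating functional formula for a PPP yield $\Exp[\prod_m h(Y_m)\mid\Phi] = \prod_n\Tilde{h}(X_n)$, and one further application of the PPP generating functional to the homogeneous $\Phi$ delivers the claimed exponential form $\G_\Phi(\Tilde{h})$.

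For \eqref{eq:PalmPGFL}, I would invoke Theorem~\ref{thm:Neveu} with $W = \prod_{m:Y_m\neq 0}h(Y_m)$, which $\Prb_\Psi^0$-a.s.\ coincides with $\prod_{m=1}^\infty h(Y_m)$ since $Q$ is diffuse. Under $\Prb_\Phi^0$ we have $X_0=0$, so $\Psi$ decomposes as $\Psi = \Psi_0 + \Psi'$, where $\Psi_0 = \sum_{k=1}^{\kappa_0}\delta_{Y_{0,k}}$ is the offspring cluster at the origin and $\Psi' = \sum_{n\geq 1}S_{-X_n}\Psi_n$ gathers the other clusters. By Slivnyak's theorem applied to the Poisson parent process, $\Phi-\delta_0$ under $\Prb_\Phi^0$ is a stationary PPP of intensity $\lambda_\Phi$, so $\Psi'$ is distributed as a stationary PPCP and is independent of $\Psi_0$.

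Next, $W\circ\theta_{Y_{0,k}}$ shifts every point of $\Psi$ by $-Y_{0,k}$ and excludes the unique one that lands at the origin, hence
\[
  W\circ\theta_{Y_{0,k}}
  = \prod_{j\neq k}h(Y_{0,j}-Y_{0,k})
    \cdot \prod_{Y'\in\Psi'}h(Y'-Y_{0,k}).
\]
Conditioning on $\Psi_0$ and using the stationarity of $\Psi'$, the conditional expectation of the second factor equals $\G_{\Psi'}(S_{-Y_{0,k}}h) = \G_\Psi(h)$, independent of $k$. To evaluate the expectation of $\sum_{k=1}^{\kappa_0}\prod_{j\neq k}h(Y_{0,j}-Y_{0,k})$, I would apply Campbell's formula together with Slivnyak's theorem to the PPP $\Psi_0$ of intensity measure $\mu Q$, producing
\[
  \mu\int_{\R^d}\G_{\Psi_0}(S_{-z}h)\,Q(\dd z)
  = \mu\int_{\R^d}\Tilde{h}(-z)\,Q(\dd z)
  = \mu\int_{\R^d}\Tilde{h}(z)\,Q^-(\dd z).
\]
Substituting these pieces into \eqref{eq:Neveu} and cancelling $\lambda_\Psi = \lambda_\Phi\mu$ yields \eqref{eq:PalmPGFL}.

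The main obstacle, as in the proof of Lemma~\ref{lem:intensity}, is careful bookkeeping: first, properly justifying that $\Psi'$ under $\Prb_\Phi^0$ is distributed as the stationary PPCP and is independent of $\Psi_0$ (which rests on Slivnyak for $\Phi$ combined with independence of the offspring processes), and second, tracking the sign conventions in the shift operators $S_t$ accurately enough to identify $\G_{\Psi_0}(S_{-z}h)$ with $\Tilde{h}(-z)$ and thereby recover the reflected measure $Q^-$ on the right-hand side.
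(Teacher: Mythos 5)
Your proof is correct, but it reverses the order of the paper's two main steps and consequently needs a couple of extra ingredients. The paper first works under $\Prb_\Psi^0$ conditionally on the parent process $\Phi$: using the Cox structure of $\Psi$ and Slivnyak's theorem for the conditional PPP, it reduces $\G_\Psi^0(h)$ to $\Exp_\Psi^0\bigl[\prod_{n=1}^\infty\Tilde{h}(X_n)\bigr]$, a functional of $\Phi$ alone, and only then applies Theorem~\ref{thm:Neveu} with $W=\prod_{n=1}^\infty\Tilde{h}(X_n)$; the resulting $\Prb_\Phi^0$-expectation is dispatched by one application of Campbell's formula for $\Psi_0$ together with Slivnyak and stationarity for $\Phi$. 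You instead apply Theorem~\ref{thm:Neveu} directly to $W=\prod_{m:Y_m\neq 0}h(Y_m)$ and do the work on the $\Prb_\Phi^0$ side: the decomposition $\Psi=\Psi_0+\Psi'$ with $\Psi'$ an independent copy of the stationary PPCP (which rests on independent marking plus Slivnyak for the Poisson parent, as you note), and the Mecke--Slivnyak identity for the finite PPP $\Psi_0$ to evaluate $\Exp\bigl[\sum_{k=1}^{\kappa_0}\prod_{j\neq k}h(Y_{0,j}-Y_{0,k})\bigr]=\mu\int\Tilde{h}(-z)\,Q(\dd z)$. Both routes rest on the same two pillars (the exchange formula and Slivnyak's theorem) and meet at the same factorization $\G_\Psi(h)\int\Tilde{h}(z)\,Q^-(\dd z)$; yours makes the structural interpretation of the Palm version (stationary PPCP superposed with an independent cluster conditioned to pass through the origin) explicit at the level of the sample decomposition, at the cost of the two justifications you flag yourself, both of which do go through. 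The sign bookkeeping ($\G_{\Psi_0}(S_{-z}h)=\Tilde{h}(-z)$, whence the reflected measure $Q^-$) is also handled correctly and matches the paper's conventions.
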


Note that in Proposition~\ref{prp:PGFL} above, $\G_\Phi$ is the
generating functional of the parent process~$\Phi$.
The relation $\G_\Psi(h)=\G_\Phi(\Tilde{h})$ with $\Tilde{h}(x) =
\G_{\Psi_1}(S_x h)$ in \eqref{eq:stationaryPGFL} and \eqref{eq:osPGFL}
is known to hold for more general cluster point processes (see, e.g.,
\cite[Example~6.3(a)]{DaleVere03} or \cite[Proposition~2.3.12 \&
  Lemma~2.3.20]{BaccBlasKarr20}), whereas the last equalities in
\eqref{eq:stationaryPGFL} and \eqref{eq:osPGFL} follow because $\Phi$
and $\Psi_1$ are PPPs, respectively (see, e.g., \cite[Exercise
  3.6]{LastPenr17} or \cite[Corollary   2.1.5]{BaccBlasKarr20}).
The relation~\eqref{eq:PalmPGFL} is derived in
\cite[Lemma~1]{GantHaen09}, to which we give another proof using the
exchange formula in Theorem~\ref{thm:Neveu}.

\begin{proof}
As stated in the proof of Lemma~\ref{lem:intensity}, once the parent
process~$\Phi = \sum_{n=1}^\infty\delta_{X_n}$ is given, the
PPCP~$\Psi$ is conditionally an inhomogeneous PPP with the conditional
intensity measure $\mu\sum_{n=1}^\infty S_{-X_n}Q$.
Since the reduced Palm version of a PPP is identical in distribution
to its original (not conditioned) version, we have
\begin{align*}
  \Exp_\Psi^0\Biggl[
    \prod_{m=1}^\infty h(Y_m)\Biggm| \Phi
  \Biggr]
  &= \Exp\Biggl[
       \prod_{m=1}^\infty h(Y_m)\biggm| \Phi
       \Biggr]
  \\
  &= \exp\biggl(
       - \mu\sum_{n=1}^\infty
           \int_{\R^d}(1-h(y))\,Q(\dd y -X_n)
     \biggr)
   = \prod_{n=1}^\infty
       \Tilde{h}(X_n),
\end{align*}
where the generating functional of a PPP is applied in the second
equality.
Taking the expectation with respect to $\Prb_\Psi^0$ and then applying
Theorem~\ref{thm:Neveu}, we obtain
\begin{align*}
  \G_\Psi^0(h)
  &= \Exp_\Psi^0\Biggl[
       \prod_{n=1}^\infty \Tilde{h}(X_n)
     \Biggr]
   = \frac{1}{\mu}\,
     \Exp_\Phi^0\Biggl[
       \int_{\R^d}\prod_{n=0}^\infty
         \Tilde{h}(X_n-z)\,\Psi_0(\dd z)
     \Biggr]
  \\
  &= \int_{\R^d}
       \Tilde{h}(-z)\,
       \Exp_\Phi^0\Biggl[\prod_{n=1}^\infty \Tilde{h}(X_n-z)\Biggr]\,
     Q(\dd z),
\end{align*}
where Campbell's formula for $\Psi_0$ is applied in the last equality.
By Slivnyak's theorem and the stationarity for $\Phi$, we have
$\Exp_\Phi^0\bigl[\prod_{n=1}^\infty \Tilde{h}(X_n-z)\bigr] =
\Exp\bigl[\prod_{n=1}^\infty \Tilde{h}(X_n)\bigr] =
\G_\Phi(\Tilde{h})$, which completes the proof.
\end{proof}

\begin{remark}
The right-hand side of \eqref{eq:PalmPGFL} is given as the generating
functional $\G_\Psi(h)$ of the stationary version of $\Psi$ multiplied
by the integral term $\int\Tilde{h}(z)\,Q^-(\dd z)$.
Since $\Tilde{h}(z)$ represents the generating functional of an
offspring process whose parent is shifted to $z\in\R^d$ and $Q^-$ is
the distribution of the location of the parent point of the offspring
at the origin on the event $\{\Psi(\{0\})=1\}$, this integral term
represents the generating functional of the cluster which is given to
have a point at the origin.
In other words, Proposition~\ref{prp:PGFL} implies that, for a
stationary PPCP, its Palm version is obtained by the superposition of
the original stationary version and an additional independent
offspring process whose parent is placed such that it has an offspring
point at the origin.
This observation is already found in, e.g., \cite{SahaDhilMiyoAndr19}
and is also interpreted such that a point~$z\in\R^d$ is first sampled
from the distribution~$Q^-$ and the Palm version of $\Phi$ at $z$ is
then obtained as $\Phi+\delta_z$ by Slivnyak's theorem, which works as
a parent process of the Palm version of $\Psi$.
Proposition~\ref{prp:PGFL} indeed supports this interpretation.
\end{remark}  

\subsection{Nearest-neighbor distance distributions}

For a stationary point process~$\Psi$ on $\R^d$, let $\Psi^! =
\sum_{m=1}^\infty \delta_{Y_m}$ be its reduced Palm version on
$\{\Psi(\{0\})=1\}$ and let $Y_*$ denote the nearest point of $\Psi^!$
from the origin.
Then, the nearest-neighbor distance distribution for $\Psi$ is defined
as the probability distribution for $\|Y_*\|$ with respect to
$\Prb_\Psi^0$, where $\|\cdot\|$ denotes the Euclidean distance.
We show below that the nearest-neighbor distance distribution for a
stationary PPCP is obtained in a similar way to
Proposition~\ref{prp:PGFL}.

\begin{proposition}\label{prp:NND}
For the stationary PPCP~$\Psi$ described in Section~\ref{subsec:PPCP},
the complementary nearest-neighbor distance distribution is given by
\begin{equation}\label{eq:NND}
  \Prb_\Psi^0(\|Y_*\| > r)
   = \G_\Phi(h^*_r)
     \int h^*_r(t)\,Q^-(\dd t),\quad r\ge0,
\end{equation}
where $h^*_r(x) = e^{-\mu Q(b_0(r)-x)}$ and $b_0(r)$ denotes a
$d$-dimensional ball centered at the origin with radius~$r$.
\end{proposition}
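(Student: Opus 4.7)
The plan is to recognize that this is essentially a corollary of Proposition~\ref{prp:PGFL} obtained by specializing the generating functional to an indicator. Since the origin is an atom of $\Psi$ under $\Prb_\Psi^0$ and $\Psi^!=\Psi-\delta_0$, the event $\{\|Y_*\|>r\}$ coincides with the void event $\{\Psi^!(b_0(r))=0\}$, which in turn equals $\bigl\{\prod_{m=1}^\infty \ind{Y_m\notin b_0(r)}=1\bigr\}$. Hence
\[
  \Prb_\Psi^0(\|Y_*\|>r)
  = \Exp_\Psi^0\Biggl[\prod_{m=1}^\infty h(Y_m)\Biggr]
  = \G_\Psi^0(h),
  \quad\text{with } h(x)=\ind{x\notin b_0(r)}.
\]
So the first step is just this identification.

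The second step is to compute $\Tilde h$ from \eqref{eq:osPGFL} for this particular $h$. Since $1-h(x+y)=\ind{x+y\in b_0(r)}$ and $\{y:x+y\in b_0(r)\}=b_0(r)-x$, we get
\[
  \Tilde h(x)
  =\exp\!\Bigl(-\mu\int_{\R^d}\ind{x+y\in b_0(r)}\,Q(\dd y)\Bigr)
  =\exp\!\bigl(-\mu\,Q(b_0(r)-x)\bigr)
  =h^*_r(x).
\]
Plugging this into \eqref{eq:PalmPGFL} of Proposition~\ref{prp:PGFL} and recalling $\G_\Psi(h)=\G_\Phi(\Tilde h)$ from \eqref{eq:stationaryPGFL} then yields exactly \eqref{eq:NND}.

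Because the heavy lifting (conditioning on $\Phi$, applying Slivnyak to the PPP offspring and parent, and invoking Theorem~\ref{thm:Neveu}) has already been done in the proof of Proposition~\ref{prp:PGFL}, there is no genuine obstacle here; the only thing worth checking is that $h$ is measurable and takes values in $[0,1]$, which is obvious for an indicator of a Borel set. If one preferred a self-contained derivation instead, one could retrace the Proposition~\ref{prp:PGFL} argument: the Cox representation gives $\Prb_\Psi^0(\Psi^!(b_0(r))=0\mid\Phi)=\prod_{n=1}^\infty h^*_r(X_n)$ by the PPP void probability and Slivnyak's theorem applied to the offspring processes, and then Theorem~\ref{thm:Neveu} together with Campbell's formula for $\Psi_0$ and Slivnyak's theorem for the parent PPP $\Phi$ produces the factorization on the right-hand side of \eqref{eq:NND}. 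Either route arrives at the result in a few lines.
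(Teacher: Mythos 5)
Your proof is correct and is essentially the paper's approach: the paper re-derives the conditional void probability $\Prb_\Psi^0(\Psi^!(b_0(r))=0\mid\Phi)=\prod_{n}h^*_r(X_n)$ via Slivnyak and the Cox representation and then states that ``the rest of the proof is similar to that of Proposition~\ref{prp:PGFL}'', which is exactly your fallback derivation. Your primary shortcut---reading the result directly off Proposition~\ref{prp:PGFL} with $h(x)=\ind{\{x\notin b_0(r)\}}$, for which $\Tilde{h}=h^*_r$---is legitimate (that proposition is stated for arbitrary measurable $h:\R^d\to[0,1]$, and $1-h$ has bounded support here) and is, if anything, a slightly cleaner packaging of the same argument.
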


\begin{proof}
As with the proof of Proposition~\ref{prp:PGFL}, we consider the
conditional probability given the parent process~$\Phi =
\sum_{n=1}^\infty \delta_{X_n}$ and obtain
\begin{align}\label{eq:NND-prf}
  \Prb_\Psi^0(\|Y_*\| > r \mid \Phi)
  &= \Prb_\Psi^0\bigl(
       \Psi^!(b_0(r)) = 0 \mid \Phi
     \bigr)
   = \Prb\bigl(
       \Psi(b_0(r)) = 0 \mid \Phi
     \bigr)
  \nonumber\\       
  &= \prod_{n=1}^\infty e^{- \mu Q(b_0(r)-X_n)}
   = \prod_{n=1}^\infty h^*_r(X_n),
\end{align}
where the second equality follows from Slivnyak's theorem and the
third does because $\Psi^!$ is conditionally an inhomogeneous PPP with
the intensity measure $\mu\sum_{n=1}^\infty S_{-X_n}Q$ when $\Phi$ is
given.
The rest of the proof is similar to that of
Proposition~\ref{prp:PGFL}.
\end{proof}

\begin{remark}
In~\eqref{eq:NND}, the term $\G_\Phi(h^*_r)$ is the complementary
contact distance distribution and that is obtained by taking the
expectation of \eqref{eq:NND-prf} with respect to $\Prb$, instead of
$\Prb_\Psi^0$ (see, e.g., \cite{Miyo19}).
The result of Proposition~\ref{prp:NND} is consistent with the
existing ones in, e.g.,
\cite{Baud81,AfshSahaDhil17b,AfshSahaDhil17,PandDhilGupt20} and gives a
unified approach to derive the nearest-neighbor distance distributions
for stationary PPCPs.
\end{remark}

\section{Applications to wireless networks with hotspot
  clusters}\label{sec:WNs}

In this section, we apply Theorem~\ref{thm:Neveu} to the analysis of a
D2D network with hotspot clusters modeled using a stationary PPCP.
We here suppose $d=2$, but unless otherwise specified, the discussion
holds for $d\ge2$ theoretically.

\subsection{Model of a Device-to-Device Network}\label{subsec:D2D}

Wireless devices are distributed on $\R^d$ according to a stationary
point process $\Psi = \sum_{m=1}^\infty\delta_{Y_m}$.
At each time slot, each device is in transmission mode with
probability~$p\in(0,1)$ or in receiving mode with probability $1-p$
independently of the others (half duplex with random access).
Devices in the transmission mode transmit signals but can not receive
ones, whereas the devices in the receiving mode can receive signals
but can not transmit ones.
We assume that all transmitting devices transmit signals with
identical transmission power (normalized to one) and share a common
frequency spectrum.
The path-loss function representing attenuation of signals with
distance is given by $\ell$ satisfying $\ell(r)\ge0$, $r>0$, and
$\int_\epsilon^\infty\ell(r)\,r^{d-1}\,\dd r<\infty$ for $\epsilon>0$.
We further assume that all wireless links receive Rayleigh fading
effects while we ignore shadowing effects.
We focus on the device at the origin, referred to as the typical
device, under the condition of $\{\Psi(\{0\})=1\}$ and examine whether
the typical device can decode messages from other transmitting
devices.
Let $\Psi_{\Tx} = \sum_{m=1}^\infty\delta_{Y'_m}$ denote the
sub-process of $\Psi$ representing the locations of devices in the
transmission mode and for each $m\in\N$, let $H_m$ denote a random
variable representing the fading effect on signals transmitted from
the device at $Y'_m$, where $H_m$, $m\in\N$, are mutually independent,
independent of $\Psi_{\Tx}$ and exponentially distributed with unit
mean due to the Rayleigh fading.
With this setup, the received signal power by the typical device
amounts to $H_m\,\ell(Y'_m)$ when it receives signals from the device
at $Y'_m$.
Hence, if the typical device is in the receiving mode and communicates
with the transmitting device at $Y'_m$, the
\textit{signal-to-interference-plus-noise ratio} (SINR) is given as
\begin{equation}\label{eq:SINR}
  \SINR_m
  = \frac{H_m\,\ell(\|Y'_m\|)}
         {\sum_{\substack{j=1\\j\ne m}}^\infty
            H_j\,\ell(\|Y'_j\|) + N},
\end{equation}
where $N$ denotes a constant representing noise at the origin.
We suppose that the typical device can successfully decode a message
from the device at $Y'_m$ if the typical device is in the receiving
mode and $\SINR_m$ in~\eqref{eq:SINR} exceeds a predefined
threshold~$\theta>0$.

\subsection{Coverage analysis}\label{subsec:coverage}

We here suppose that a device in the receiving mode communicates
with the nearest device in transmission mode.
The probability that the typical device can successfully decode a
message from its partner is called the coverage probability and is
given by
\begin{equation}\label{eq:coverage-def}
  \CP(\theta)
  = (1-p)\sum_{m=1}^\infty
      \Prb_\Psi^0\bigl(
        \SINR_m > \theta, \|Y'_m\|\le\|Y'_j\|, j\in\N
      \bigr),
\end{equation}
where $1-p$ on the right-hand side indicates that the typical device
must be in the receiving mode and the sum over $m\in\N$ represents the
probability that the SINR from the nearest transmitting device exceeds
the threshold~$\theta$.
We now suppose that the point process~$\Psi$ representing the
locations of devices is given as a stationary PPCP studied in
Section~\ref{sec:PPCP}.
Then, $\Psi_{\Tx} = \sum_{m=1}^\infty\delta_{Y'_m}$ representing the
locations of devices in the transmission mode is also a stationary
PPCP, where the parent process remains the same as the homogeneous
PPP~$\Phi$ with intensity~$\lambda_\Phi$, whereas the offspring
processes~$\Psi'_n = \sum_{k=1}^{\kappa'_n}\delta_{Y'_{n,k}}$,
$n\in\N$, are finite PPPs with the intensity measure~$p\mu Q$.

\begin{theorem}\label{thm:Coverage}
For the model of a D2D network described in Section~\ref{subsec:D2D}
with the devices deployed according to a stationary PPCP in
Section~\ref{subsec:PPCP}, the coverage probability is given by
\begin{equation}\label{eq:coverage}  
  \CP(\theta)
  = (1-p)p\mu
    \int_{\R^d}
      \bigl(I_{1,\theta}(t) + I_{2,\theta}(t)\bigr)\,
    Q^-(\dd t),
\end{equation}
where $Q^-$ is given in Lemma~\ref{lem:intensity} and
\begin{align}\label{eq:C(theta)}
  I_{1,\theta}(t)
  &= \int_{\R^d}
       e^{-\theta N/\ell(\|y\|)}\,
       C_\theta(y, t)
       E_\theta(y)\,
     Q(\dd y - t),
  \nonumber\\
  I_{2,\theta}(t)
  &= \lambda_\Phi
     \int_{\R^d}\!\int_{\R^d}
       e^{-\theta N/\ell(\|y\|)}\,
       C_\theta(y, t)\,
       C_\theta(y, x)\,E_\theta(y)\,
     Q(\dd y - x)\,\dd x,
  \nonumber\\
  E_\theta(y)
  &= \exp\biggl(
       - \lambda_\Phi\int_{\R^d}
           \bigl[1-C_\theta(y,w)\bigr]\,
         \dd w
     \biggr),
  \nonumber\\
  C_\theta(y,x)
  &= \exp\biggl(
       - p\mu\,\biggl[
           1 -
               \int_{\|z\|>\|y\|}
                 \Bigl(
                   1 + \theta\,\frac{\ell(\|z\|)}{\ell(\|y\|)}
                 \Bigr)^{-1}\,
               Q(\dd z - x)
         \biggr]
     \biggr).
\end{align}
\end{theorem}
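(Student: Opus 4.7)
The plan is to use Theorem~\ref{thm:Neveu} to move the calculation from the Palm distribution $\Prb_\Psi^0$ (where $\Psi$ is a PPCP with a complicated Palm structure) to $\Prb_\Phi^0$ (where $\Phi$ is a Poisson point process, so Slivnyak's theorem applies). With $W=\sum_m\ind{\SINR_m>\theta,\ \|Y'_m\|\le\|Y'_j\|\text{ for all }j\in\N}$, so that $\CP(\theta)=(1-p)\,\Exp_\Psi^0[W]$, I would invoke \eqref{eq:Neveu} together with $\lambda_\Psi=\lambda_\Phi\mu$ to rewrite $\Exp_\Psi^0[W]=\mu^{-1}\,\Exp_\Phi^0\bigl[\sum_{k=1}^{\kappa_0}W\circ\theta_{Y_{0,k}}\bigr]$. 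Applying Campbell's formula to the Poisson offspring process~$\Psi_0$ with intensity measure~$\mu\,Q$, and changing variables $t=-y$, turns the sum over~$k$ into an integral against $Q^-(\dd t)$, producing the outer integral of~\eqref{eq:coverage}.

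Next, I would describe the post-shift configuration. After the shift $\theta_{-t}$ (equivalently $\theta_{Y_{0,k}}$), Slivnyak's theorem makes the non-origin parents an independent homogeneous PPP of intensity~$\lambda_\Phi$ (which I shall continue to denote by~$\Phi$), while the parent of the typical receiver sits at~$t$; the typical receiver itself is at~$0$, and the remaining offspring of the typical cluster form, by Slivnyak at the Poisson offspring level, a PPP with intensity $\mu\,Q(\dd z-t)$. After $p$-thinning, the whole transmitter process $\Psi_{\Tx}$ is, conditionally on the parents, an inhomogeneous PPP with intensity $p\mu\,Q(\dd z-t)+\sum_{x\in\Phi}p\mu\,Q(\dd z-x)$, independent across clusters.

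For the inner computation, I would condition on the parents and apply Slivnyak and the refined Campbell formula to the conditional PPP~$\Psi_{\Tx}$: the sum over candidate nearest transmitters~$y$ becomes an integral against the conditional intensity, weighted by the probability that an added transmitter at~$y$ has $\SINR>\theta$ while $\Psi_{\Tx}(b_0(\|y\|))=0$. Using the exponential distribution of $H_y$, this probability factors into $e^{-\theta N/\ell(\|y\|)}$ times a PPP Laplace functional evaluated on $g(z)=\ind{\|z\|>\|y\|}(1+\theta\,\ell(\|z\|)/\ell(\|y\|))^{-1}$. The void/Laplace formula then collapses the contribution from each cluster with parent~$x$ to exactly $C_\theta(y,x)$, because $\int[1-g(z)]\,Q(\dd z-x)=1-\int_{\|z\|>\|y\|}(1+\theta\,\ell(\|z\|)/\ell(\|y\|))^{-1}Q(\dd z-x)$ matches the exponent in the definition of $C_\theta(y,x)$.

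Finally, I would split the conditional intensity of $\Psi_{\Tx}$ into the typical-parent piece $p\mu\,Q(\dd y-t)$ and the $\Phi$-cluster pieces $\sum_{x\in\Phi}p\mu\,Q(\dd y-x)$, and then take expectation over~$\Phi$. The typical piece yields $p\mu\,I_{1,\theta}(t)$ once the PGFL identity $\Exp\bigl[\prod_{x\in\Phi}C_\theta(y,x)\bigr]=E_\theta(y)$ is used. The $\Phi$-cluster piece is handled by Campbell--Mecke on~$\Phi$: the Slivnyak step turns $\sum_{x\in\Phi}$ into $\lambda_\Phi\int\dd x$, extracts an additional factor $C_\theta(y,x)$ corresponding to the selected parent, and leaves the residual product, whose expectation is again $E_\theta(y)$; this gives $p\mu\,I_{2,\theta}(t)$. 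Combining the two pieces, multiplying by $(1-p)$, and integrating against $Q^-(\dd t)$ reproduces~\eqref{eq:coverage}. The main obstacle will be the bookkeeping of the three nested Slivnyak applications — at the offspring level of the typical cluster, at the offspring level of the parent of the candidate nearest transmitter, and at the parent PPP level — and verifying that each removed point does not perturb the functional form, so that the uniform factor $C_\theta(y,x)$ arises at every parent, including the one containing~$y$.
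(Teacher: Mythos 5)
Your proposal is correct and follows essentially the same route as the paper: Neveu's exchange formula transfers the Palm expectation from $\Prb_\Psi^0$ to $\Prb_\Phi^0$, Slivnyak's theorem, the refined Campbell formula and the Poisson generating functional produce the factors $C_\theta$ and $E_\theta$, and the split into the same-cluster and other-cluster contributions yields $I_{1,\theta}$ and $I_{2,\theta}$ integrated against $Q^-$. The only difference is the ordering --- you invoke the exchange formula at the outset and then work under $\Prb_\Phi^0$, whereas the paper first computes the conditional sum given $\Phi$ under $\Prb_\Psi^0$ and exchanges afterwards --- and the bookkeeping of the nested Slivnyak steps that you flag at the end is precisely what the paper's proof carries out.
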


Before proceeding on the proof of Theorem~\ref{thm:Coverage}, we
give an intuitive interpretation to the result of it.
First, as stated in the preceding section, $Q^-$ denotes the
distribution for the location of the parent point of the typical
device at the origin.
Thus, $p\mu I_{1,\theta}(t)$ and $p\mu I_{2,\theta}(t)$
in~\eqref{eq:coverage} represent the cases where the typical device,
whose parent is located at $t\in\R^d$, communicates with the
transmitting device in the same cluster and in a different cluster,
respectively; that is, the location of the communication partner is
sampled from a finite PPP with the intensity measure $p\mu Q(\dd y -
t)$ in $I_{1,\theta}(t)$ and is from one with $p\mu Q(\dd y -x)$ in
$I_{2,\theta}(t)$, where $x$ is also sampled from a homogeneous PPP
with intensity $\lambda_\Phi$.
Moreover, $E_\theta(y)$ represents the effect from other clusters
which are neither the one having the typical device nor the one having
its communication partner at $y$.
Finally, $C_\theta(y,x)$ represents the effect of the cluster with
the parent point at $x\in\R^d$ when the typical device communicates
with the transmitting device at $y$.

\begin{proof}
Similar to the proof of Proposition~\ref{prp:PGFL}, once the parent
process~$\Phi = \sum_{n=1}^\infty\delta_{X_n}$ is given, the point
process~$\Psi_{\Tx}$ representing the locations of devices in the
transmission mode is conditionally an inhomogeneous PPP with the
conditional intensity measure $p\mu\sum_{n=1}^\infty S_{-X_n}Q$.
Thus, we can use the corresponding approach to that obtaining the
coverage probability for a cellular network with BSs deployed
according to a PPP (see, e.g,, \cite{AndrBaccGant11} or
\cite[Section~5.2]{BlasHaenKeelMukh18}).
Since $H_m$, $m\in\N$, are mutually independent, exponentially
distributed, and also independent of $\Phi$, we have from
\eqref{eq:coverage-def},
\begin{align*}
  &\Prb_\Psi^0\bigl(
     \SINR_m > \theta, \|Y'_m\|\le\|Y'_j\|, j\in\N
   \bigm| \Phi\bigr)
  \\ 
  &= \Prb_\Psi^0\Biggl(
       H_m > \frac{\theta}{\ell(\|Y'_m\|)}\,
             \Biggl(
               \sum_{\substack{j=1\\j\ne m}}^\infty
                 H_j\,\ell(\|Y'_j\|)
               + N
             \Biggr),\:
       \|Y'_m\|\le\|Y'_j\|, j\in\N
     \Biggm| \Phi \Biggr)
  \\
  &= \Exp_\Psi^0\Biggl[
       e^{-\theta N/\ell(\|Y'_m\|)}
       \prod_{\substack{j=1\\j\ne m}}^\infty
         \biggl(
           1 + \theta\,\frac{\ell(\|Y'_j\|)}{\ell(\|Y'_m\|)}\,
         \biggr)^{-1}\,
         \ind{\{\|Y'_j\|>\|Y'_m\|\}}
     \Biggm|\Phi\Biggr],
\end{align*}
where $\ind{A}$ denotes the indicator function for set~$A$ and we use
$\Prb(H_m>a)=e^{-a}$ for $a\ge0$ and $\Exp[e^{-s H_j}] = (1+s)^{-1}$
in the last equality.
Summing the above expression over $m\in\N$, we have from Slivnyak's
Theorem for $\Psi$ conditioned on $\Phi$ and the refined
Campbell formula~(see, e.g., \cite[Theorem 13.2.III]{DaleVere08},
\cite[Theorem~9.1]{LastPenr17} or
\cite[Theorem~3.1.9]{BaccBlasKarr20}),
\begin{align}\label{eq:cover-prf1}
  &\sum_{m=1}^\infty
     \Prb_\Psi^0\bigl(
       \SINR_m > \theta, \|Y'_m\|\le\|Y'_j\|, j\in\N
     \bigm| \Phi\bigr)
  \nonumber\\
  &= p\mu\sum_{n=1}^\infty
       \int_{\R^d}
         e^{- \theta N/\ell(\|y\|)}\,
         \Exp\Biggl[  
           \prod_{j=1}^\infty
             \biggl(
               1 + \theta\,\frac{\ell(\|Y'_j\|)}{\ell(\|y\|)}\,
             \biggr)^{-1}\,
             \ind{\{\|Y'_j\|>\|y\|\}}
         \Biggm|\Phi\Biggr]\,
       Q(\dd y - X_n).
\end{align}
Furthermore, the generating functional of a PPP applying to the above
expectation yields
\begin{align*}
  &\Exp\Biggl[  
     \prod_{j=1}^\infty
       \biggl(
         1 + \theta\,\frac{\ell(\|Y'_j\|)}{\ell(\|y\|)}\,
       \biggr)^{-1}\,
       \ind{\{\|Y'_j\|>\|y\|\}}
   \Biggm|\Phi\Biggr]
  \\
  &= \exp\biggl(
       - p\mu\sum_{i=1}^\infty
           \int_{\R^d}
             \biggl[
               1 - \biggl(
                     1 + \theta\,\frac{\ell(\|z\|)}{\ell(\|y\|)}\,
                   \biggr)^{-1}\,
                   \ind{\{\|z\|>\|y\|\}}
             \biggr]\,
           Q(\dd z - X_i)
     \biggr)
  \\           
  &= \prod_{i=1}^\infty\exp\biggl(
       - p\mu\,
         \biggl[
           1 - \int_{\|z\|>\|y\|}
                 \biggl(
                   1 + \theta\,\frac{\ell(\|z\|)}{\ell(\|y\|)}\,
                 \biggr)^{-1}\,
               Q(\dd z - X_i)
         \biggr]
       \biggr)
   = \prod_{i=1}^\infty C_\theta(y,X_i).         
\end{align*}
Plugging this into \eqref{eq:cover-prf1}, taking the expectation with
respect to $\Prb_\Psi^0$ and then applying Neveu's exchange formula in
Theorem~\ref{thm:Neveu}, we have
\begin{align}\label{eq:cover-prf2}
  \CP(\theta)
  &= (1-p)p\mu\,
     \Exp_\Psi^0\Biggl[
       \sum_{n=1}^\infty\int_{\R^d}
         e^{-\theta N/\ell(\|y\|)}
         \prod_{i=1}^\infty C_\theta(y,X_i)\,
       Q(\dd y-X_n)
     \Biggr]
  \nonumber\\
  &= (1-p)p\,
     \Exp_\Phi^0\Biggl[
       \int_{\R^d}
         \sum_{n=0}^\infty\int_{\R^d}
           e^{-\theta N/\ell(\|y\|)}
           \prod_{i=0}^\infty C_\theta(y,X_i-t)\,
         Q(\dd y-X_n+t)\,
       \Psi_0(\dd t)    
     \Biggr]
  \nonumber\\
  &= (1-p)p\mu\,
     \int_{\R^d}
       \Exp_\Phi^0\Biggl[
         \sum_{n=0}^\infty\int_{\R^d}
           e^{-\theta N/\ell(\|y\|)}
           \prod_{i=0}^\infty C_\theta(y,X_i-t)\,
         Q(\dd y-X_n+t)\,
       \Biggr]\,
     Q(\dd t),
\end{align}
where we note the existence of $X_0=0$ on $\{\Phi(\{0\})=1\}$ in the
second equality and apply Campbell's formula in the third equality.
Noting that $X_0=0$ on $\{\Phi(\{0\})=1\}$, we separate the
expectation in \eqref{eq:cover-prf2} into
\begin{align}\label{eq:cover-prf3}
  &\Exp_\Phi^0\Biggl[
     \sum_{n=0}^\infty\int_{\R^d}
       e^{-\theta N/\ell(\|y\|)}
       \prod_{i=0}^\infty C_\theta(y,X_i-t)\,
     Q(\dd y-X_n+t)\,
   \Biggr]
  \nonumber\\  
  &= \int_{\R^d}
       e^{-\theta N/\ell(\|y\|)}\,
       C_\theta(y,-t)\,
       \Exp_\Phi^0\Biggl[
         \prod_{i=1}^\infty C_\theta(y,X_i-t)
       \Biggr]\,
     Q(\dd y + t)
  \nonumber\\
  &\quad\mbox{}
   + \Exp_\Phi^0\Biggl[
       \sum_{n=1}^\infty\int_{\R^d}
         e^{-\theta N/\ell(\|y\|)}\,
         C_\theta(y,-t)\,C_\theta(y,X_n-t)\,
         \prod_{\substack{i=1\\i\ne n}}^\infty C_\theta(y,X_i-t)\,
       Q(\dd y-X_n+t)\,
      \Biggr],
\end{align}
and consider the two terms on the right-hand side
of~\eqref{eq:cover-prf3} one by one.
For the first term, the generating functional of a PPP yields
\begin{align}\label{eq:cover-prf4}
  (\text{1st term of~\eqref{eq:cover-prf3}})
  &= \int_{\R^d}
       e^{-\theta N/\ell(\|y\|)}\,
       C_\theta(y,-t)\,
       \exp\biggl(
         - \lambda_\Phi\int_{\R^d}
             [1- C_\theta(y,w)]\,
           \dd w
       \biggr)\,
     Q(\dd y + t)
  \nonumber\\
  &= I_{1,\theta}(-t).
\end{align}
On the other hand, applying Campbell's formula and the generating
functional for $\Phi$ to the second term on the right-hand side of
\eqref{eq:cover-prf3}, we have
\begin{align}\label{eq:cover-prf5}
  &(\text{2nd term of~\eqref{eq:cover-prf3}})
  \nonumber\\
  &= \lambda_\Phi\int_{\R^d}
       \int_{\R^d}
         e^{-\theta N/\ell(\|y\|)}\,
         C_\theta(y,-t)\,C_\theta(y,x)\,
         \Exp_\Phi^0\Biggl[
           \prod_{i=1}^\infty C_\theta(y,X_i-t)
         \Biggr]
       Q(\dd y-x)\,
     \dd x
  \nonumber\\
  &= \lambda_\Phi\int_{\R^d}
       \int_{\R^d}
         e^{-\theta N/\ell(\|y\|)}\,
         C_\theta(y,-t)\,C_\theta(y,x)\,
         \exp\biggl(
           - \lambda_\Phi\int_{\R^d}
               [1 - C_\theta(y,w)]\,
             \dd w
         \biggr)\,
       Q(\dd y-x)\,
     \dd x
  \nonumber\\
  &= I_{2,\theta}(-t).
\end{align}
Finally, plugging \eqref{eq:cover-prf4} and \eqref{eq:cover-prf5} into
\eqref{eq:cover-prf3}, and then into \eqref{eq:cover-prf2}, we have
\eqref{eq:coverage} and the proof is completed.
\end{proof}

When $d=2$ and the distribution~$Q$ for the locations of offspring
points depends only on the distance; that is, $Q(\dd y) =
f_{\os}(\|y\|)\,\dd y$ for $y\in\R^2$, we obtain a numerically
computable form of the coverage probability.

\begin{corollary}\label{cor:coverage}
When $d=2$ and $Q(\dd y) = f_{\os}(\|y\|)\,\dd y$, $y\in\R^2$, the
coverage probability in Theorem~\ref{thm:Coverage} is reduced to
\begin{equation}\label{eq:coverage-2d}
  \CP(\theta)
  = 2\pi(1-p)p\mu
    \int_0^\infty\!\!
      \int_0^\infty
        e^{-\theta N/\ell(s)}\,\Hat{E}_\theta(s)\,
        \Hat{C}_\theta(s,u)\,\Hat{I}_\theta(s,u)\,
      \dd s\,
      f_{\os}(u)\,u\,
    \dd u,
\end{equation}
where
\begin{align}
  \Hat{I}_\theta(s,u)
  &= g(s\mid u)
     + 2\pi\lambda_\Phi
       \int_0^\infty
         \Hat{C}_\theta(s,r)\,g(s\mid r)\,r\,
       \dd r,
  \nonumber\\
  \Hat{E}_\theta(s)
  &= \exp\biggl(
       - 2\pi\lambda_\Phi
         \int_0^\infty
           \bigl[1-\Hat{C}_\theta(s,v)\bigr]\,v\,
         \dd v
     \biggr),
  \nonumber\\  
  \Hat{C}_\theta(s,r)
  &= \exp\biggl(
       - p\mu\biggl[
           1 - \int_s^\infty
                 \Bigl(1+\theta\,\frac{\ell(q)}{\ell(s)}\Bigr)^{-1}
                 g(q\mid r)\,
               \dd q 
         \biggr]
     \biggr),
  \label{eq:C(theta)-2d}\\
  g(s\mid r)
  &= 2s \int_0^\pi
          f_{\os}(\sqrt{s^2 + r^2 -2 s r\cos\varphi})\,
        \dd\varphi.
  \nonumber
\end{align}
\end{corollary}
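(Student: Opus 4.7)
The plan is to exploit the rotational invariance of $f_\os$ so that every ingredient of Theorem~\ref{thm:Coverage} becomes a function of Euclidean norms only; once this is in place, the polar-coordinate change of variables, together with the kernel $g(s\mid r)$, does the entire computation. First I will note that $Q^- = Q$ under the hypothesis (since $f_\os(\|-t\|) = f_\os(\|t\|)$), so the outer integration $\int_{\R^2}(\cdot)\,Q^-(\dd t)$ will reduce to $2\pi\int_0^\infty(\cdot)\,f_\os(u)\,u\,\dd u$ with $u=\|t\|$, provided the integrand depends on $t$ only through its norm.

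The core reduction is to show that $C_\theta(y,x)$ depends on $(y,x)$ only through $(s,r)=(\|y\|,\|x\|)$ and coincides with $\Hat{C}_\theta(s,r)$ of~\eqref{eq:C(theta)-2d}. For this I will pass to polar $z=(q\cos\psi,q\sin\psi)$ in the inner integral of the definition of $C_\theta$, rotating the plane so that $x=(r,0)$ without loss of generality; the factor $\|z-x\|$ becomes $\sqrt{q^2+r^2-2qr\cos\psi}$, and integrating $f_\os$ over $\psi\in[0,2\pi]$ together with the Jacobian $q$ produces exactly $g(q\mid r)$, giving the claimed $\int_s^\infty\bigl(1+\theta\,\ell(q)/\ell(s)\bigr)^{-1}g(q\mid r)\,\dd q$. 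The same argument applied to the $w$-integral in the definition of $E_\theta(y)$ then yields $E_\theta(y)=\Hat{E}_\theta(s)$.

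Next I will reduce $I_{1,\theta}(t)$ and $I_{2,\theta}(t)$ to functions of $u=\|t\|$. For $I_{1,\theta}$ I will put $y$ into polar and rotate so that $t=(u,0)$; the angular integral of $f_\os(\|y-t\|)$ with the radial Jacobian collapses into $g(s\mid u)$, producing the first summand of $\Hat{I}_\theta(s,u)$ weighted by $e^{-\theta N/\ell(s)}\,\Hat{C}_\theta(s,u)\,\Hat{E}_\theta(s)$. For $I_{2,\theta}$ I will first perform the inner $x$-integral at fixed $y$: rotating so that $y=(s,0)$, the angular integral of $f_\os(\|y-x\|)$ yields $g(s\mid r)/s$, leaving $s^{-1}\int_0^\infty \Hat{C}_\theta(s,r)\,g(s\mid r)\,r\,\dd r$; the subsequent polar integration over $y$ then contributes a Jacobian $s$ that cancels the $1/s$ and a factor $2\pi$ from the trivial angular integration of the now isotropic integrand, yielding exactly the $2\pi\lambda_\Phi$-coefficient in $\Hat{I}_\theta$.

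Finally I will add the two pieces and apply the outer polar integration against $Q^-$, which produces the overall prefactor $2\pi(1-p)p\mu$ and the weight $u\,f_\os(u)$, giving~\eqref{eq:coverage-2d}. The only step that needs careful bookkeeping is the inner $x$-integral of $I_{2,\theta}$: the $g$-identity produces a $g(s\mid r)/s$ per angular integration, and one must verify that the Jacobian from the polar form of $\dd y$ exactly restores a single factor of $s$ while the orthogonal angular integration supplies the $2\pi$ that sits in front of $\lambda_\Phi$. Everything else is a direct polar substitution using the isotropy established in the first two paragraphs.
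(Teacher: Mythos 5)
Your proposal is correct and follows essentially the same route as the paper: observe $Q^-=Q$ by isotropy, convert every integral to polar coordinates so that the angular integrations of $f_{\os}$ collapse into the kernel $g(\cdot\mid\cdot)$ (yielding $C_\theta(y,x)=\Hat{C}_\theta(\|y\|,\|x\|)$, $E_\theta(y)=\Hat{E}_\theta(\|y\|)$, and the reductions of $I_{1,\theta}$, $I_{2,\theta}$ to $\Hat{I}_{1,\theta}(u)$, $\Hat{I}_{2,\theta}(u)$), and then sum the two pieces. Your bookkeeping for the $I_{2,\theta}$ term, including the cancellation of the $1/s$ from $g(s\mid r)/s$ against the radial Jacobian of the $y$-integral, is accurate; the paper simply states the polar conversion more tersely.
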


\begin{proof}
Since the distribution $Q$ depends only on the distance, it holds that
$Q^-(\dd t) = Q(\dd t) = f_{\os}(\|t\|)\,\dd t$, $t\in\R^2$, and
\eqref{eq:coverage} is reduced to
\begin{align}
  \CP(\theta)
  &= (1-p)p\mu
     \int_{\R^2}
       \bigl(I_{1,\theta}(t) + I_{2,\theta}(t)\bigr)\,
       f_{\os}(\|t\|)\,
     \dd t
  \nonumber\\
  &= 2\pi(1-p)p\mu
     \int_0^\infty
       \bigl(\Hat{I}_{1,\theta}(u) + \Hat{I}_{2,\theta}(u)\bigr)\,
       f_{\os}(u)\,u\,
     \dd u,
  \label{eq:coverage-cor-prf1}
\end{align}
where the polar coordinate conversion is applied in the second
equality and
\begin{align*}
  \Hat{I}_{1,\theta}(u)
  &= \int_0^\infty
       e^{-\theta N/\ell(s)}\,\Hat{C}_\theta(s,u)\,\Hat{E}_\theta(s)\,
       g(s\mid u)\,
     \dd s,
  \\ 
  \Hat{I}_{2,\theta}(u)
  &= 2\pi\lambda_\Phi
     \int_0^\infty\!\!\int_0^\infty
       e^{-\theta N/\ell(s)}\,\Hat{C}_\theta(s,u)\,
       \Hat{C}_\theta(s,r)\,\Hat{E}_\theta(s)\,
       g(s\mid r)\,
     \dd s\,r\,\dd r.
\end{align*}
Therefore, we have
\[
  \Hat{I}_{1,\theta}(u) + \Hat{I}_{2,\theta}(u)
  = \int_0^\infty
      e^{-\theta N/\ell(s)}\,\Hat{C}_\theta(s,u)\,\Hat{E}_\theta(s)\,
      \Hat{I}_\theta(s,u)\,
    \dd s.
\]
Plugging this into \eqref{eq:coverage-cor-prf1}, we have
\eqref{eq:coverage-2d} and the proof is completed.
\end{proof}

\subsection{Device discovery}\label{subsec:discovery}

We next consider the problem of device discovery.
Devices in the transmission mode transmit broadcast messages, whereas
a device in the receiving mode can discover the transmitters if it can
successfully decode the broadcast messages.
When a device in the receiving mode receives the signal from one
transmitting device, the signals from all other transmitting devices
work as interference.
Then, the expected number of transmitting devices discovered by the
typical device is represented by
\begin{equation}\label{eq:discovery}
  \mathcal{N}(\theta)
  = (1-p)\,\Exp_\Psi^0\Biggl[
      \sum_{m=1}^\infty
        \ind{\{\SINR_m>\theta\}}
    \Biggr].
\end{equation}

\begin{proposition}\label{prp:discovery}
Consider the D2D network model described in Section~\ref{subsec:D2D}
with the devices deployed according to a stationary PPCP given in
Section~\ref{subsec:PPCP}.
Then, the expected number $\mathcal{N}(\theta)$ of transmitting
devices discovered by the typical device is obtained by
\eqref{eq:coverage} in Theorem~\ref{thm:Coverage} replacing the
integral range $\|z\|>\|y\|$ in~\eqref{eq:C(theta)} by $\R^d$.
Moreover, when $d=2$ and $Q(\dd y) = f_{\os}(\|y\|)\,\dd y$ for
$y\in\R^2$, $\mathcal{N}(\theta)$ is reduced to \eqref{eq:coverage-2d}
in Corollary~\ref{cor:coverage} replacing the integral range
$(s,\infty)$ in \eqref{eq:C(theta)-2d} by $(0,\infty)$.
\end{proposition}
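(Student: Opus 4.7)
The plan is to run the proof of Theorem~\ref{thm:Coverage} again, keeping careful track of the single place where the nearest-neighbor constraint $\|Y'_m\|\le\|Y'_j\|$ enters. Since \eqref{eq:discovery} sums the SINR indicators over \emph{all} transmitting devices without any such constraint, simply dropping it from the derivation should yield the asserted formula.

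To execute this, I would first condition on the parent process $\Phi=\sum_{n=1}^\infty\delta_{X_n}$, so that $\Psi_{\Tx}$ becomes a conditional inhomogeneous PPP with intensity measure $p\mu\sum_{n=1}^\infty S_{-X_n}Q$, and use the mutual independence and unit-mean exponential distribution of the fading marks~$H_m$ to write
\[
  \Prb_\Psi^0(\SINR_m>\theta\mid\Phi)
  =\Exp_\Psi^0\!\left[
      e^{-\theta N/\ell(\|Y'_m\|)}
      \prod_{\substack{j=1\\j\ne m}}^\infty
        \Bigl(1+\theta\,\tfrac{\ell(\|Y'_j\|)}{\ell(\|Y'_m\|)}\Bigr)^{-1}
    \,\Big|\,\Phi\right].
\]
This is exactly the expression appearing in the proof of Theorem~\ref{thm:Coverage} but \emph{without} the indicator $\ind{\{\|Y'_j\|>\|Y'_m\|\}}$. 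Summing over $m\in\N$, then invoking Slivnyak's theorem for the conditional PPP $\Psi_{\Tx}\mid\Phi$ together with the refined Campbell formula, and finally applying the generating functional of a PPP to the remaining product, the same sequence of manipulations that produced~\eqref{eq:cover-prf1} gives the analogous identity, but with every occurrence of $C_\theta(y,X_i)$ replaced by its variant in which the inner integration range $\{z:\|z\|>\|y\|\}$ is extended to all of $\R^d$.

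From this point onward the derivation is identical to the remainder of the proof of Theorem~\ref{thm:Coverage}: take the expectation with respect to $\Prb_\Psi^0$, apply Neveu's exchange formula (Theorem~\ref{thm:Neveu}) to convert it into a $\Prb_\Phi^0$-expectation, isolate the contribution of $X_0=0$ to separate the typical device's own cluster ($I_{1,\theta}$-part) from the contributions of the other clusters ($I_{2,\theta}$-part), and apply Slivnyak's theorem, Campbell's formula and the generating functional of $\Phi$ to finish. The resulting expression is exactly \eqref{eq:coverage} with the stated modification of $C_\theta$. The two-dimensional specialization follows from precisely the same polar-coordinate reduction used in Corollary~\ref{cor:coverage}, with the inner integration range $(s,\infty)$ in~\eqref{eq:C(theta)-2d} replaced by $(0,\infty)$. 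The main bookkeeping point I would double-check, and the only place any real effort is required, is confirming that no other step of the original proof implicitly exploited the nearest-neighbor ordering; this is essentially immediate, since that ordering entered only through the indicator identified above.
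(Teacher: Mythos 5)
Your proposal is correct and follows essentially the same route as the paper: the paper's proof likewise observes that \eqref{eq:discovery} differs from \eqref{eq:coverage-def} only by the absence of the nearest-neighbor event $\{\|Y'_m\|\le\|Y'_j\|,\,j\in\N\}$, which enters the derivation of Theorem~\ref{thm:Coverage} solely through the indicator $\ind{\{\|Y'_j\|>\|Y'_m\|\}}$ and hence only changes the integration range in $C_\theta$ (and in $\Hat{C}_\theta$). Your version merely spells out the repeated steps in more detail than the paper does.
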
  

\begin{proof}
Since
$\Exp_\Psi^0\bigl[\sum_{m=1}^\infty\ind{\{\SINR_m>\theta\}}\bigr] =
\sum_{m=1}^\infty\Prb_\Psi^0(\SINR_m>\theta)$, the difference between
\eqref{eq:coverage-def} and \eqref{eq:discovery} is only the event
$\{\|Y'_m\|\le\|Y'_j\|, j\in\N\}$.
This leads to the difference of the integral ranges in $C_\theta(y,x)$
in \eqref{eq:C(theta)} and in $\Hat{C}_\theta(s,r)$ in
\eqref{eq:C(theta)-2d}.
\end{proof}

\begin{remark}
Since $\Prb_\Psi^0\bigl(\bigcup_{m=1}^\infty\{\SINR_m>\theta\}\bigr)
\le \sum_{m=1}^\infty\Prb_\Psi^0(\SINR_m>\theta) =
\mathcal{N}(\theta)$, Proposition~\ref{prp:discovery} also gives an
upper bound for the coverage probability with the max-SINR association
policy, where a device in the receiving mode receives a message with
the strongest SINR.
This upper bound is known to be exact for $\theta>1$ since
$\sum_{m=1}^\infty\ind{\{\SINR_m>\theta\}} \le 1 + \theta^{-1}$ (see
\cite{DhilGantBaccAndr12} or \cite[Lemma
  5.1.2]{BlasHaenKeelMukh18}).
\end{remark}  

\subsection{Numerical experiments}\label{subsec:numeric}

We present the results of numerical experiments for the analytical
results obtained in Sections~\ref{subsec:coverage}
and~\ref{subsec:discovery}.
We set $d=2$ and the distribution~$Q$ for the location of the
offspring points as $Q(\dd y) = f_{\os}(\|y\|)\,\dd y$ and $f_{\os}(s)
= e^{-s^2/(2\sigma^2)}/(2\pi\sigma^2)$, $s\ge0$; that is, $Q$ is the
isotropic normal distribution with variance~$\sigma^2$, so that the
resulting PPCP~$\Psi$ is the Thomas point process.
Furthermore, the path-loss function is set as $\ell(r)=r^{-\beta}$,
$r>0$, with $\beta>2$.

The numerical results for the coverage probability are given in
Figure~\ref{fig:coverage}, where the values of $\CP(\theta)$ with
different values of $\theta$ and $\sigma^2$ are plotted.
The other parameters are fixed at $\lambda_\Phi=\pi^{-1}$, $\mu=10$,
$p=0.5$, $\beta=4$ and $N=0$.
For comparison, the values when the devices are located according to a
homogeneous PPP are also displayed in the figure with the label
``$\sigma^2\to\infty$.''
From Figure~\ref{fig:coverage}, we can see that, as the value of
$\sigma^2$ increases, the coverage probability decreases and is closer
to that with the homogeneous PPP.
This is contrary to the case of cellular networks, where the coverage
probability increases and is closer to that with the homogeneous PPP
from below as the variance of the locations of offspring points
increases (see \cite{Miyo19}).
This difference is thought to be due to the fact that the locations of
a receiving device and its communication partner are near to each
other in the PPCP-deployed D2D network since they are both points of
the same PPCP, whereas the location of a receiver is likely far
from that of the associated BS in the PPCP-deployed cellular network
since their locations are independent of each other.

\begin{figure}
\begin{center}
\vbox{\kern2cm\hbox{\kern-.3cm\includegraphics[width=12cm]{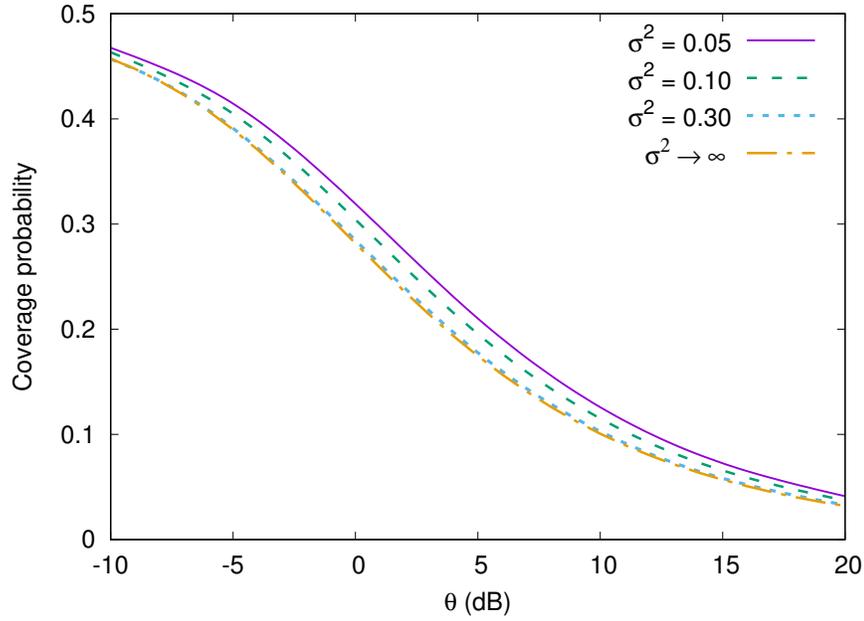}}\kern-2cm}%
\end{center}
\caption{Coverage probability as a function of SINR
  threshold ($\lambda_\Phi=\pi^{-1}$, $\mu=10$, $p=0.5$, $\beta=4$ and
  $N=0$)}\label{fig:coverage}
\end{figure}

The results of the device discovery is given in
Figure~\ref{fig:discovery}, where we know that the closed form
expression of the expected number of discovered devices is obtained as
$\mathcal{N}^{(\mathsf{PPP})}(\theta) =
(1-p)(\beta/2\pi)\sin(2\pi/\beta)\,\theta^{-2/\beta}$ for the case of
the homogeneous PPP with $N\equiv0$ (see, e.g.,
\cite{HamiChelBussFleu08}).
The figure shows similar features to the coverage probability.

\begin{figure}
\begin{center}
\vbox{\kern2cm\hbox{\kern-.3cm\includegraphics[width=12cm]{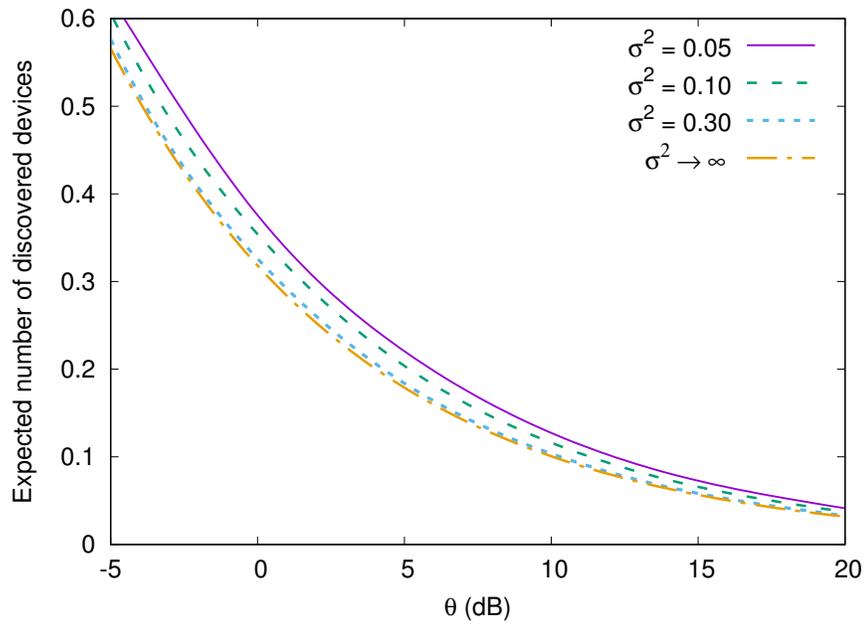}}\kern-2cm}%
\end{center}
\caption{Expected number of discovered devices as a function of SINR
  threshold ($\lambda_\Phi=\pi^{-1}$, $\mu=10$, $p=0.5$, $\beta=4$ and 
  $N=0$)}\label{fig:discovery}
\end{figure}

\section{Conclusion}\label{sec:concl}

In this paper, we have presented an alternative form of Neveu's
exchange formula for jointly stationary point processes on $\R^d$ and
then demonstrated that it is useful for the analysis of spatial
stochastic models given based on stationary PPCPs.
We have first applied it to the Palm characterization for a stationary
PPCP and then to the analysis of a D2D network modeled using a
stationary PPCP.
Although we have only considered some fundamental problems, we expect
that the new form of the exchange formula will be utilized for the
analysis of more sophisticated models leading up to the development of
5G and beyond networks.

\bibliographystyle{plain}

\end{document}